\newcommand{\tr}{\mbox{tr}}
\newcommand{\spa}{\mbox{span}}
\newcommand{\Hess}{\mbox{Hess}}
\newcommand{\grad}{\mbox{grad}}
\newcommand{\co}{\colon}
\newcommand{\R}{\mathbb{R}}
\newcommand{\Sp}{\mathbb{S}}
\newcommand{\bl}{\bar\lambda}
\newcommand{\pl}{\langle}
\newcommand{\pr}{\rangle}
\newcommand{\X}{\mathfrak{X}}
\newcommand{\bla}{\bar\lambda}
\newcommand{\Hv}{\mathcal{H}}
\newcommand{\la}{\lambda}
\newtheorem{theorem}{Theorem}[section]
\newtheorem{lemma}[theorem]{Lemma}
\theoremstyle{definition}
\newtheorem{proposition}[theorem]{Proposition}
\theoremstyle{remark}
\newtheorem{remark}[theorem]{Remark}
\numberwithin{equation}{section}
\newcommand{\Addresses}{{
\bigskip 
\footnotesize
Institute of Mathematics and Computer Science \newline
University of S\~ao Paulo, S\~ao Carlos, Brazil \\
\textit{E-mail address:} \texttt{mateusrodrigues@alumni.usp.br,
manfio@icmc.usp.br}
}}
\title{Euclidean hypersurfaces with semi-parallel Moebius second fundamental form}
\date{}
\author{M. S. R. Antas and F. Manfio}
\begin{document}

\begin{abstract}
In this paper, we classify umbilic-free hypersurfaces $f\co M^{n}\to\mathbb{R}^{n+1}$, $n \geq 4,$ with semi-parallel Moebius second fundamental form and three distinct principal curvatures. 
\end{abstract}

\maketitle

{\small
\noindent {\textbf{Mathematics Subject Classification:}}\hspace*{0.1cm}
Primary 53B25, 53C30, Secondary 53C40.
}
\vspace{.2cm} \\
{\small
\noindent {\textbf{Keywords:}}\hspace*{0.1cm}
Moebius metric, Moebius shape operator, semi-parallel hypersurfaces.
}

\section{Introduction}

The study of isometric immersions $f\co M^n\to\R^{n+k}$ within the framework of Moebius geometry started with Wang \cite{Wang}, who introduced a Moebius invariant metric $\pl\,\,,\,\pr^\ast$, called the {\em Moebius metric}, and a Moebius invariant $2$-form $\beta$ on $M^n$, the {\em Moebius second fundamental form} of $f$, and proved that, when $k=1$ e $f$ is free of
umbilic points, $\pl \,\,,\,\pr^\ast$ and $\beta$ complete determines the hypersurface up to Moebius transformations. The corresponding conformal Gauss, Codazzi and Ricci equations involve two other
important Moebius invariant tensors, named the {\em Blaschke tensor} and the {\em Moebius form}.

A special class of these isometric immersions consists of the so-called {\em Moebius parallel submanifolds}. This means that the Moebius second fundamental form is parallel with respect to the normal connection, that is, $\beta$ satisfies the relation $\nabla^\perp\beta=0$. Hypersurfaces and, more generally, submanifolds in the unit sphere, whose Moebius second fundamental form is parallel, have been extensively studied and classified in \cite{classification-mobius-parallel-hyper, Zhai2, Zhai}. A natural generalization of this class consists of the so-called {\em Moebius semi-parallel submanifolds}, in the sense that the Moebius second fundamental form is semi-parallel. This means that $\beta$ satisfies the equation $R\cdot\beta=0$ (see Section \ref{sec:basics}).

In a recent paper \cite{semi}, Hu--Xie--Zhai initiated a study on Moebius semi-parallel submanifolds $f\co M^{n}\to\Sp^{n+p}$. In addition to general results involving the Blaschke tensor, the authors prove that, in the case of hypersurfaces, the number of distinct Moebius principal curvatures is at most 3. Furthermore, they obtain a classification of Moebius semi-parallel hypersurfaces with two distinct Moebius principal curvatures.

Bearing in mind the core results of \cite{semi}, a natural problem is to complete the classification of Moebius semi-parallel hypersurfaces, that is, to classify such hypersurfaces with three distinct Moebius principal curvatures. We point out that a Moebius semi-parallel hypersurface $f\co M^{n}\to \mathbb{R}^{n+1}$, $n \geq 4$, with three distinct constant Moebius principal curvatures is also Moebius parallel. 

\vspace{.2cm}

We prove the following theorem.

\begin{theorem} \label{main}
Let $f\co M^n\to\R^{n+1}$, $n\geq 3$, be a hypersurface with semi-parallel Moebius second fundamental form, with three distinct principal curvatures. Then $M^n$ is locally Moebius equivalent to a hypersurface with parallel Moebius second fundamental form, which is a cone over the torus $\mathbb{S}^{p}(r)\times \mathbb{S}^{q}(\sqrt{1-r^{2}}) \subset \mathbb{S}^{p+q+1}$, with $p+q <n$. 
\end{theorem}

The proof of Theorem \ref{main} for $n=3$ is due to Hu--Xie--Zhai \cite[Theorem 5.3]{semi}. For $n \geq 4$, our approach consists in first showing that a Moebius semi-parallel hypersurface with three distinct principal curvatures locally has constant Moebius scalar curvature, and then applying Theorem 5.5 of \cite{semi} to establish that it must be Moebius parallel.

\vspace{.2cm}

Added in proof. After an update of this paper on the arXiv, we became aware of the work of Li--Xie \cite{Li}, which independently solved the same problem using different techniques. Our approach, however, offers a streamlined framework that naturally extends to higher codimensions (see \cite[Theorem 1.4]{Antas1} to submanifolds with flat normal bundle).

\section{Preliminaries} \label{sec:basics}

%In this section we just review the basic Moebius invariants and recall the structure equations for a hypersurface $M^n$ in the Euclidean space $\R^{n+1}$.

Let $f\co M^{n}\to\R^{n+k}$ be an isometric immersion free of umbilical points. Then, the function $\rho\in C^\infty(M)$ defined by 
\[
\rho^2 = \frac{n}{n-1}(\|\alpha\|^{2}-n\|\Hv\|^2)
\]
does not vanish on $M^n$, where $\alpha$ and $\Hv$ stand for the second fundamental form and the mean curvature vector field of $f$, respectively. The metric
\begin{equation} \label{eq:metricstar}
\langle \,\,,\,\rangle^\ast=\rho^{2}\langle \,\,, \,\rangle
\end{equation}
is called the \emph{Moebius metric} determined by $f$. It was proved in \cite{Wang} that the metric \eqref{eq:metricstar} is invariant under conformal transformations of $\R^{n+k}$. 

\vspace{.2cm}

The {\em Moebius second fundamental form} of $f\co M^{n}\to\R^{n+k}$ is the symmetric bilinear traceless map $\beta$ defined by
\[
\beta(X,Y) = \rho(\alpha(X,Y)-\pl X,Y\pr\Hv),
\]
for all $X,Y\in\X(M)$. The {\em Blaschke tensor} $\psi$ of $f$ is the symmetric $C^\infty(M)$-bilinear form given by
\[
\psi(X,Y) = \frac{1}{\rho}\pl\beta(X,Y),\Hv\pr + 
\frac{1}{2\rho^2}\big(\|\grad^\ast\rho\|^2_\ast+\|\Hv\|^2\big)
\pl X,Y\pr^\ast - \frac{1}{\rho}\Hess^\ast\rho(X,Y)
\]
and its {\em Moebius form} $\omega$ is the normal bundle valued one-form defined by
\[
\omega(X) = -\frac{1}{\rho}\big(\nabla^\perp_X\Hv+
\beta(X,\grad^\ast\rho)\big),
\]
where $\grad^\ast$ and $\Hess^\ast$ denote the gradient and the Hessian relative to the metric $\pl \,\,,\,\pr^\ast$.

\vspace{.2cm}

In the special case of a hypersurface $f\co M^n\to\R^{n+1}$, choose a local smooth unit normal vector field $N$ along $f$ and denote by $A$ the shape operator of $f$ with respect to $N$, which is given by
\[
\pl AX,Y\pr = \pl\alpha(X,Y),N\pr,
\]
for all $X,Y\in\X(M)$. Consider now the {\em Moebius shape operator} $B$ associated to $\beta$, given by
\[
\pl BX,Y\pr^\ast = \pl\beta(X,Y),N\pr,
\]
for all $X,Y\in\X(M)$. In this case, $B$ is explicitly given by 
\begin{equation} \label{eq:BrelA}
B=\rho^{-1}(A-HI),
\end{equation}
where $H=\pl\Hv,N\pr$ is the mean curvature function of $f$. The Blaschke tensor $\psi$ and the Moebius form $\omega$ can be written as
\[
\hat{\psi}X=\frac{H}{\rho}BX+\frac{1}{2\rho^{2}}(\|\grad^{*}\rho\|^{2}_{*}+H^{2})X-\frac{1}{\rho}\nabla^{*}_{X}\grad^{*}\rho,
\]
and
\[
\omega(X)=-\frac{1}{\rho}\langle \grad^{*}H+B\grad^{*}\rho,X\rangle^{*},
\] 
for all $X\in\mathfrak{X}(M)$. Moreover, the conformal Gauss, Codazzi and Ricci equations for $f\co M^n\to\R^{n+1}$ are, respectively
\begin{equation}\label{eq:confGauss}
R^{*}(X,Y)=BX\wedge^{*}BY+\hat{\psi}X \wedge^{*} Y+X\wedge^{*}\hat{\psi}Y,
\end{equation} 
\begin{eqnarray} \label{eq:ConfCodazzi}
\begin{aligned}
(\nabla^{*}_{X}B)Y-(\nabla^{*}_{Y}B)X&=\omega(X)Y-\omega(Y)X,\\
(\nabla^{*}_{X}\hat{\psi})Y-(\nabla^{*}_{Y}\hat{\psi})X&=\omega(Y)BX-\omega(X)BY,
\end{aligned}
\end{eqnarray}
and
\begin{equation}
d\omega(X,Y)=\langle [\hat{\psi},B]X,Y\rangle^{*},
\end{equation}
where $\wedge^{*}$ and $R^{*}$ stand for the wedge product and the curvature tensor, respectively, with respect to $\pl \,\,,\,\pr^\ast$.

\vspace{.2cm}

Let $\lambda_1,\ldots,\lambda_n$ be the principal curvatures of $f$ with respect to the shape operator $A$, and denote by $\bl_1,\ldots,\bl_n$ the \emph{Moebius principal curvatures} of $f$, with respect to the Moebius shape operator $B$. It follows from \eqref{eq:BrelA} that
\[
\bla_i=\rho^{-1}(\lambda_i-H),
\]
$1\leq i\leq n$. In particular, the number and multiplicities of the principal curvatures $\bar\lambda_i$ are the same as those of the principal curvatures $\lambda_i$.

\vspace{.2cm}

Recall that an isometric immersion $f\co M^{n}\to\R^{n+k}$ is {\em semi-parallel} if $R\cdot\alpha=0$, where
\begin{eqnarray*}
\begin{aligned}
(R\cdot\alpha)(X,Y,Z,W) & =  R^\perp(X,Y)\alpha(Z,W) - 
\alpha(R(X,Y)Z,W) \\ & - \alpha(Z,R(X,Y)W),
\end{aligned}
\end{eqnarray*}
for all $X,Y,Z,W\in\X(M)$. When $f\co M^n\to\R^{n+1}$ is Moebius umbilical-free hypesurface, the above condition, expressed in terms of the Moebius shape operator $B$, becomes:
\[
(R(X,Y)\cdot B)Z = R(X,Y)BZ - B(R(X,Y)Z),
\]
for all $X,Y,Z\in\X(M)$. We point out that the Moebius form of a Moebius semi-parallel umbilical-free hypesurface $f\co M^n\to\R^{n+1}$, $n\geq3$, is always closed (see \cite[Proposition 3.2]{semi}). Therefore, it follows from the conformal Ricci equation that there exists an orthonormal tangent frame $X_1,\ldots,X_n$ on $M^n$ with respect to $\pl\,\,,\,\pr^\ast$, and functions $\theta_i\in C^\infty(M)$, $1\leq i\leq n$, such that
\begin{equation} \label{eq:theta_i}
BX_i=\bar{\lambda}_iX_i \quad \mbox{and} \quad \hat{\psi}X_i=\theta_iX_i.
\end{equation}
%In Proposition 5.1 of \cite{semi} was proved that a Moebius semi-parallel umbilical-free hypersurface has at most three distinct Moebius principal curvatures. 

The next result is a simple characterization of Moebius semi-parallel hypersurfaces.

\begin{lemma} \label{condition}
Let $f\co M^{n}\to\mathbb{R}^{n+1}$, $n\geq 3$, be a hypersurface free of umbilic points. Then $f$ is Moebius semi-parallel hypersurface if and only if
\[
\bar{\lambda}_i \bar{\lambda}_j+\theta_i+\theta_j=0, \quad 1 \le i \ne j \le 3,
\]
where $\bar{\lambda}_1, \bar{\lambda}_2$ and $\bar{\lambda}_3$ are the distinct Moebius principal curvatures of $f$.
\end{lemma}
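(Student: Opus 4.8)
The plan is to translate the semi-parallelism condition $\bar R\cdot\beta=0$ into the stated algebraic identity among the eigenvalues of $B$ and $\hat\psi$, using the conformal Gauss equation. First I would record the elementary reduction: since $\beta(X,Y)=\langle BX,Y\rangle^{*}N$ (with $N$ a local unit normal) and the normal bundle of a hypersurface is a flat line bundle, the van der Waerden-Bortolotti curvature acts on $\beta$ only through its tangential part, so $(\bar R(X,Y)\cdot\beta)(Z,W)=-\langle BR^{*}(X,Y)Z,W\rangle^{*}-\langle BZ,R^{*}(X,Y)W\rangle^{*}$; using that $R^{*}(X,Y)$ is skew-symmetric with respect to $\langle\cdot,\cdot\rangle^{*}$ this becomes
\[
(\bar R(X,Y)\cdot\beta)(Z,W)=\langle [R^{*}(X,Y),B]Z,W\rangle^{*}.
\]
Hence $f$ is Moebius semi-parallel if and only if $[R^{*}(X,Y),B]=0$ for all $X,Y\in\mathfrak X(M)$. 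Because the Moebius form of such an $f$ is closed, the conformal Ricci equation gives $[\hat\psi,B]=0$, so we may work in the common orthonormal eigenframe $X_1,\dots,X_n$ with $BX_i=\bar\lambda_iX_i$ and $\hat\psi X_i=\theta_iX_i$.

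Next I would compute $R^{*}(X_i,X_j)$ from the conformal Gauss equation \eqref{eq:confGauss}. Since $B$ and $\hat\psi$ are diagonal in this frame, all three wedge terms are scalar multiples of $X_i\wedge^{*}X_j$, giving
\[
R^{*}(X_i,X_j)=(\bar\lambda_i\bar\lambda_j+\theta_i+\theta_j)\,X_i\wedge^{*}X_j .
\]
Expanding a general pair $X=\sum_ia_iX_i$, $Y=\sum_jb_jX_j$ shows that $[R^{*}(X,Y),B]=\sum_{i,j}a_ib_j\,[R^{*}(X_i,X_j),B]$, a bilinear expression in $(a,b)$; therefore the semi-parallel condition is equivalent to $[R^{*}(X_i,X_j),B]=0$ for every pair $i\neq j$.

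Then a direct computation of the commutator on the frame yields
\[
[R^{*}(X_i,X_j),B]=(\bar\lambda_i\bar\lambda_j+\theta_i+\theta_j)(\bar\lambda_j-\bar\lambda_i)\,T_{ij},
\]
where $T_{ij}$ is the nonzero endomorphism sending $X_i\mapsto X_j$, $X_j\mapsto X_i$ and annihilating the other frame vectors. Thus each such commutator vanishes exactly when $\bar\lambda_i=\bar\lambda_j$ or $\bar\lambda_i\bar\lambda_j+\theta_i+\theta_j=0$. If $X_i,X_j$ lie in the same eigendistribution the first alternative holds automatically, so only pairs coming from distinct eigendistributions impose a condition, and there $\bar\lambda_i\neq\bar\lambda_j$ forces $\bar\lambda_i\bar\lambda_j+\theta_i+\theta_j=0$. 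Since $f$ has exactly three distinct Moebius principal curvatures $\bar\lambda_1,\bar\lambda_2,\bar\lambda_3$, this is precisely the asserted identity for $1\le i\neq j\le 3$; conversely that identity makes every commutator $[R^{*}(X_i,X_j),B]$ vanish, which gives the equivalence. As a byproduct, comparing the identity for two frame vectors inside a multiplicity-$\ge 2$ eigendistribution against a vector in another one shows that $\theta$ is constant on each eigendistribution, so $\theta_1,\theta_2,\theta_3$ are unambiguously defined.

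There is no serious obstacle here; the only points requiring care are the two reductions, namely that $\bar R\cdot\beta=0$ is equivalent to the tangential commutator identity $[R^{*}(X,Y),B]=0$ (which uses the flatness of the normal connection of a hypersurface) and that it suffices to test this identity on the eigenframe pairs $R^{*}(X_i,X_j)$. The remainder is the bookkeeping of the two short linear-algebra computations of $R^{*}(X_i,X_j)$ and of $[R^{*}(X_i,X_j),B]$ in the diagonalizing frame.
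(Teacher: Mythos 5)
Your proof is correct and follows essentially the same route as the paper: reduce $\bar R\cdot\beta=0$ to the commutation/eigenframe condition $\langle R^{*}(X_i,X_j)X_k,X_l\rangle^{*}(\bar\lambda_l-\bar\lambda_k)=0$, then use the conformal Gauss equation in the common eigenframe (available since the Moebius form is closed) to identify the relevant curvature with $\bar\lambda_i\bar\lambda_j+\theta_i+\theta_j$. Your version merely spells out in more detail what the paper leaves implicit, namely that it suffices to test on eigenframe pairs and that the converse direction also holds, so no substantive difference.
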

\begin{proof}
Notice that $f$ is Moebius semi-parallel hypersurface if and only if
\[
\langle R^{*}(X_i,X_j)X_k, BX_l\rangle^{*}+\langle BX_k, R^{*}(X_i,X_j)X_l\rangle^{*}=0,
\]
that is,
\[
\langle R^{*}(X_i,X_j)X_k,X_l\rangle^{*}(\bar{\lambda}_l-\bar{\lambda}_k)=0,
\]
for all $1\leq i,j,k,l\leq n$. Taking $j=k$ and $i=l$, with $1 \le i \ne j \le n$, we get
\[
\langle R^{*}(X_i,X_j)X_j,X_i\rangle (\bar{\lambda}_i-\bar{\lambda}_j)=0.
\]
Therefore, we have $K^{*}(X_i,X_j)=0$ or $\bar{\lambda}_i=\bar{\lambda}_j$, for all $1 \le i \ne j\le n$. The conclusion follows from the conformal Gauss equation \eqref{eq:confGauss}.
\end{proof}

In particular, Lemma \ref{condition} shows that the number of distinct eigenvalues of $\hat{\psi}$ is at most three.

\section{Some basic lemmas}

In this section, we will establish some properties of the eigenbundle $E_{\la_i}$ associated with the Moebius principal curvatures, which will be used in the proof of Theorem \ref{main}. Let $f\co M^{n}\to\mathbb{R}^{n+1}$, $n\geq 4$, be an umbilic-free hypersurface with semi-parallel Moebius second fundamental form, with three distinct Moebius principal curvatures $\bar{\lambda}_1, \bar{\lambda}_2$, $\bar{\lambda}_3$ of multiplicities $m_1, m_2$ and $m_3$, respectively. Since $n\geq4$, one of these $m_i$ is necessarily greater than one, say, $m_3\geq2$. 

\vspace{.2cm}

Regarding the functions $\theta_i$ given in \eqref{eq:theta_i}, the functions
\[
\bar{\lambda}_i^{2}+2\theta_i
\]
are nowhere vanishing, for $1\leq i\leq 3$. In fact, if $\bar{\lambda}_1^{2}+2\theta_1$ is zero at a point $p \in M^n$, from Lemma \ref{condition} one has
\begin{align*}(\bar{\lambda}_2-\bar{\lambda}_1)(\bar{\lambda}_3-\bar{\lambda}_1)&=\bar{\lambda}_2\bar{\lambda}_3-\bar{\lambda}_2 \bar{\lambda}_1-\bar{\lambda}_1 \bar{\lambda}_3 + \bar{\lambda}_1^{2}\\
&=-\theta_2-\theta_3+\theta_2+\theta_1+\theta_1+\theta_3+\bar{\lambda}_1^{2}=0,
\end{align*} which implies $\bar{\lambda}_2=\bar{\lambda}_1$ or $\bar{\lambda}_3=\bar{\lambda}_1$, and this is a contradiction.

%\vspace{.2cm}

%We denote by $E_{\lambda_i}$ the eigenbundle distribution associated with $\bar\lambda_i$, for $1\leq i\leq3$.

%Since the Moebius form of $f$ is closed, there exists an orthonormal frame $\{X_1,\ldots,X_n\}$ with respect to $\langle \cdot, \cdot \rangle^{*}$ such that
%\[
%BX_i=\bar{\lambda}_j X_i \quad \mbox{and} \quad \hat{\psi}X_i=\theta_jX_i,
%\]
%for each $1\leq j\leq 3$ and for all $m_1+\ldots+m_{j-1}+1 \le i \le m_1+\ldots+m_j$.

%\begin{lemma}
%The functions $\bar{\lambda}_i^{2}+2\theta_i$ are nowhere vanishing, for $1\leq i\leq 3$.
%\end{lemma}
%\begin{proof}
%Assume otherwise that $\bar{\lambda}_1^{2}+2\theta_1$ is zero at $p \in M^n$. Then, from Lemma \ref{condition} one has
%\begin{align*}(\bar{\lambda}_2-\bar{\lambda}_1)(\bar{\lambda}_3-\bar{\lambda}_1)&=\bar{\lambda}_2\bar{\lambda}_3-\bar{\lambda}_2 \bar{\lambda}_1-\bar{\lambda}_1 \bar{\lambda}_3 + \bar{\lambda}_1^{2}\\
%&=-\theta_2-\theta_3+\theta_2+\theta_1+\theta_1+\theta_3+\bar{\lambda}_1^{2}=0,
%\end{align*} which implies $\bar{\lambda}_2=\bar{\lambda}_1$ or $\bar{\lambda}_3=\bar{\lambda}_1$, and this is a contradiction.
%\end{proof}

\begin{lemma}\label{warped}
If $\dim E_{\lambda_i} \geq 2$, then $X_k(\bar{\lambda}_i^2+2\theta_i)=0$, for all $X_k \in E_{\lambda_i}.$
\end{lemma}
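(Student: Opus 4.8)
The statement to be proved is Lemma~\ref{warped}: if $\dim E_{\lambda_i}\geq 2$, then $X_k(\bar\lambda_i^2+2\theta_i)=0$ for every $X_k\in E_{\lambda_i}$. The natural route is to exploit the conformal Codazzi equations \eqref{eq:ConfCodazzi} applied to two linearly independent vector fields lying in the same eigenbundle $E_{\lambda_i}$. First I would fix indices: suppose $\dim E_{\lambda_i}\geq 2$ and pick $X_k,X_\ell\in E_{\lambda_i}$ with $k\neq\ell$, both from the orthonormal frame diagonalizing $B$ and $\hat\psi$ simultaneously. Since $BX_k=\bar\lambda_i X_k$ and $BX_\ell=\bar\lambda_i X_\ell$ with $\bar\lambda_i$ constant along nothing a priori, I would compute $(\nabla^*_{X_k}B)X_\ell-(\nabla^*_{X_\ell}B)X_k$ and expand it in the frame. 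The $X_k,X_\ell$-components and the components along the other eigendirections each give relations; the key ones are the coefficients of $X_k$ in this Codazzi identity, which force $\omega$ to vanish on $E_{\lambda_i}$ (using $\dim E_{\lambda_i}\geq 2$, so there is a third available index or we play $k,\ell$ against each other), and relations of the form $X_k(\bar\lambda_i)=0$ together with connection-coefficient identities $\langle\nabla^*_{X_k}X_\ell,X_j\rangle$ for $X_j$ outside $E_{\lambda_i}$.

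\textbf{Second step.} Having extracted from the first Codazzi equation that $\omega(X_k)=0$ for $X_k\in E_{\lambda_i}$ and control of the relevant connection coefficients, I would turn to the second Codazzi equation, $(\nabla^*_X\hat\psi)Y-(\nabla^*_Y\hat\psi)X=\omega(Y)BX-\omega(X)BY$, again evaluated on the pair $X_k,X_\ell\in E_{\lambda_i}$. Since $\omega$ vanishes on $E_{\lambda_i}$, the right-hand side is zero, so $(\nabla^*_{X_k}\hat\psi)X_\ell=(\nabla^*_{X_\ell}\hat\psi)X_k$. Expanding the left-hand side in the frame and reading off the $X_k$-component (respectively $X_\ell$-component) yields $X_k(\theta_i)=(\theta_j-\theta_i)\langle\nabla^*_{X_\ell}X_k,X_j\rangle$-type terms balanced against the analogous expression from $B$; combining with the connection relations already obtained from the $B$-Codazzi equation, the off-diagonal connection contributions cancel in the combination $\bar\lambda_i^2+2\theta_i$. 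Concretely, $X_k(\bar\lambda_i)=0$ should come out directly (the $X_k$-diagonal part of the $B$-Codazzi identity, using $\omega(X_k)=0$), and then $X_k(\theta_i)=0$ should follow from the $\hat\psi$-Codazzi identity once the connection terms are matched, so that $X_k(\bar\lambda_i^2+2\theta_i)=2\bar\lambda_i X_k(\bar\lambda_i)+2X_k(\theta_i)=0$.

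\textbf{Alternative and the main obstacle.} An alternative, possibly cleaner, route uses Lemma~\ref{condition}: the semi-parallel condition gives $\bar\lambda_i\bar\lambda_j+\theta_i+\theta_j=0$ for $i\neq j$, from which $\bar\lambda_i^2+2\theta_i$ can be re-expressed; for instance adding the relations for the pairs $(i,j)$ and $(i,k)$ with $\{i,j,k\}=\{1,2,3\}$ and subtracting the $(j,k)$ relation gives $\bar\lambda_i^2+2\theta_i$ purely in terms of $\bar\lambda_i,\bar\lambda_j,\bar\lambda_k,\theta_j,\theta_k$ in a way that might make the differentiation along $E_{\lambda_i}$ transparent once one knows how the other eigenvalues vary along $E_{\lambda_i}$. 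However, I expect the genuine obstacle to be bookkeeping the connection coefficients $\Gamma^{*\,p}_{km}=\langle\nabla^*_{X_k}X_m,X_p\rangle$ when the three eigenbundles have various multiplicities: one must be careful that within $E_{\lambda_i}$ (which has dimension $\geq 2$) the frame is only defined up to rotation, so the "diagonal" extraction must be done invariantly, e.g. by contracting the Codazzi identity or by summing over an orthonormal basis of $E_{\lambda_i}$ rather than picking a single pair. Getting that invariance right — and ensuring the cross terms between $E_{\lambda_i}$ and its complement truly cancel in the specific combination $\bar\lambda_i^2+2\theta_i$ rather than in $\bar\lambda_i$ and $\theta_i$ separately — is where the real work lies; everything else is routine frame computation using \eqref{eq:ConfCodazzi} and Lemma~\ref{condition}.
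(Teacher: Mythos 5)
There is a genuine gap at the centre of your plan: the claim that, because $\dim E_{\lambda_i}\ge 2$, the first conformal Codazzi equation ``forces $\omega$ to vanish on $E_{\lambda_i}$'', after which you want $X_k(\bar\lambda_i)=0$ and $X_k(\theta_i)=0$ separately. This is not what the equations give, and it is not needed. Carrying out the frame computation you describe, with $X_k,X_\ell\in E_{\lambda_i}$ orthonormal and taking the inner product of \eqref{eq:ConfCodazzi} with $X_\ell$, the Ryan-type argument in this Moebius setting picks up precisely the right-hand side term: one gets
\begin{equation*}
X_k(\bar{\lambda}_i)=\omega(X_k), \qquad X_k(\theta_i)=-\bar{\lambda}_i\,\omega(X_k),
\end{equation*}
and no choice of components (along $X_k$, $X_\ell$, or directions outside $E_{\lambda_i}$) lets you separate $\omega(X_k)$ from $X_k(\bar\lambda_i)$; the transversal components only produce the connection-coefficient relations used later for umbilicity of $E_{\lambda_i}$. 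In particular your second step collapses: you cannot set the right-hand side of the $\hat\psi$-Codazzi equation to zero, and the stronger conclusions $X_k(\bar\lambda_i)=0$, $X_k(\theta_i)=0$ are not available at this stage of the argument (the Moebius form of a semi-parallel hypersurface is only known to be closed, not zero).

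The correct proof — which is exactly the paper's and is shorter than your plan — is to stop at the two displayed identities and multiply the first by $\bar\lambda_i$ and add the second: the $\omega(X_k)$ terms cancel and $\bar\lambda_i X_k(\bar\lambda_i)+X_k(\theta_i)=0$, i.e.\ $X_k(\bar\lambda_i^2+2\theta_i)=0$. Your closing remark that the cancellation might only occur ``in the specific combination $\bar\lambda_i^2+2\theta_i$ rather than in $\bar\lambda_i$ and $\theta_i$ separately'' is precisely the right instinct; the main line of your proposal contradicts it. Note also that neither Lemma \ref{condition} nor the semi-parallelity hypothesis (beyond furnishing the common eigenframe of $B$ and $\hat\psi$) is needed, and there is no frame-rotation subtlety: the identities above are obtained from a single orthonormal pair in $E_{\lambda_i}$, whose existence is all that $\dim E_{\lambda_i}\ge 2$ is used for.
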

\begin{proof}
Let $X_k,X_\ell \in E_{\lambda_i}$ with $k\ne\ell$. Taking the inner product of the conformal Codazzi equations \eqref{eq:ConfCodazzi} with $X_\ell$ gives
%Then, the inner product of $X_\ell$ with the conformal Codazzi equations  \begin{align*}(\nabla^{*}_{X_k}B)X_\ell-(\nabla^{*}_{X_\ell}B)X_k&=\omega(X_k)X_\ell-\omega(X_\ell)X_k,\\ (\nabla^{*}_{X_k}\hat{\psi})X_\ell-(\nabla^{*}_{X_\ell}\hat{\psi})X_k&=\omega(X_\ell)BX_k-\omega(X_k)BX_\ell\end{align*} yields 
\begin{equation*}
        X_k(\bar{\lambda}_i)=\omega(X_k) \quad \mbox{and}  \quad X_k(\theta_i)=-\bar{\lambda}_i\omega(X_k).
\end{equation*}
The result follows by multiplying the first equation by $\bar{\lambda}_i$ and adding it to the second one.
%, we obtain $X_k(\bar{\lambda}_i^{2}+2\theta_i)=0$.
\end{proof}

\begin{lemma}\label{umbilical}
The distribution $E_{\lambda_i}$ is umbilical with mean curvature vector field
\[
-(\grad^{*}(\log |\bar{\lambda}_i^{2}+2\theta_i|^{-\frac{1}{2}}))_{E_{\lambda_i}^{\perp}},
\]
for all $1 \le i \le 3$.
\end{lemma}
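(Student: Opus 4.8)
The plan is to work throughout with the simultaneous $\langle\cdot,\cdot\rangle^{*}$-orthonormal eigenframe $\{X_1,\dots,X_n\}$ of $B$ and $\hat\psi$ fixed above. Since the multiplicities $m_1,m_2,m_3$ are constant, grouping the frame vectors by their eigenvalue exhibits each $E_{\lambda_i}$ as the span of a fixed subset of the $X_k$'s, so it is smooth, and by $C^\infty(M)$-bilinearity of the tensor $(X,Y)\mapsto(\nabla^{*}_XY)_{E_{\lambda_i}^{\perp}}$ on $\Gamma(E_{\lambda_i})$ it suffices to prove, for $X_k,X_\ell\in E_{\lambda_i}$ and any frame vector $X_m\notin E_{\lambda_i}$,
\[
\langle\nabla^{*}_{X_k}X_\ell,X_m\rangle^{*}=0\ \ (k\ne\ell),\qquad
\langle\nabla^{*}_{X_k}X_k,X_m\rangle^{*}=-X_m\bigl(\log|\bar\lambda_i^{2}+2\theta_i|^{-\frac12}\bigr)\ \ \text{(independent of }k).
\]
Summing the second identity over the $X_m$ spanning $E_{\lambda_i}^{\perp}$ then gives exactly the asserted mean curvature vector. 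Everything will be extracted by plugging the eigenframe into the two conformal Codazzi equations \eqref{eq:ConfCodazzi} and pairing with $\langle\cdot,\cdot\rangle^{*}$.

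First I would record, for any two frame vectors $X_a,X_b$ with distinct $B$-eigenvalues $\bar\lambda_a\ne\bar\lambda_b$ (and corresponding $\hat\psi$-eigenvalues $\theta_a,\theta_b$), the two identities obtained by applying the Codazzi equations to $X=X_a$, $Y=X_b$ and pairing with $X_b$; using that $B,\hat\psi$ are $\langle\cdot,\cdot\rangle^{*}$-symmetric with the $X$'s as eigenvectors, a short computation gives
\[
(\bar\lambda_a-\bar\lambda_b)\langle\nabla^{*}_{X_b}X_a,X_b\rangle^{*}=X_a(\bar\lambda_b)-\omega(X_a),\qquad
(\theta_a-\theta_b)\langle\nabla^{*}_{X_b}X_a,X_b\rangle^{*}=X_a(\theta_b)+\omega(X_a)\bar\lambda_b .
\]
Next, pairing the $B$-Codazzi equation for $X=X_k\in E_{\lambda_i}$, $Y=X_m\notin E_{\lambda_i}$ with a third frame vector $X_\ell\in E_{\lambda_i}$, $\ell\ne k$, kills every term except a nonzero multiple of $\langle\nabla^{*}_{X_k}X_\ell,X_m\rangle^{*}$, giving the off-diagonal vanishing; and the first identity above, rewritten through $\langle\nabla^{*}_{X_b}X_a,X_b\rangle^{*}=-\langle\nabla^{*}_{X_b}X_b,X_a\rangle^{*}$, shows $\langle\nabla^{*}_{X_k}X_k,X_m\rangle^{*}=\bigl(X_m(\bar\lambda_i)-\omega(X_m)\bigr)/(\bar\lambda_i-\bar\lambda_j)$, which does not depend on $k$. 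At this point $E_{\lambda_i}$ is already umbilical.

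To identify the mean curvature vector I would eliminate $\omega$ between the two identities. Writing $s:=\langle\nabla^{*}_{X_b}X_a,X_b\rangle^{*}$, the two relations give $X_a(\bar\lambda_b)=(\bar\lambda_a-\bar\lambda_b)s+\omega(X_a)$ and $X_a(\theta_b)=(\theta_a-\theta_b)s-\omega(X_a)\bar\lambda_b$, hence
\[
X_a(\bar\lambda_b^{2}+2\theta_b)=2\bar\lambda_bX_a(\bar\lambda_b)+2X_a(\theta_b)=2s\bigl[\bar\lambda_a\bar\lambda_b-\bar\lambda_b^{2}+\theta_a-\theta_b\bigr].
\]
Now Lemma \ref{condition}, applied to the genuinely distinct eigenvalues $\bar\lambda_a,\bar\lambda_b$, gives $\theta_a-\theta_b=-\bar\lambda_a\bar\lambda_b-2\theta_b$, so the bracket collapses to $-(\bar\lambda_b^{2}+2\theta_b)$ and therefore $X_a(\bar\lambda_b^{2}+2\theta_b)=-2s\,(\bar\lambda_b^{2}+2\theta_b)$. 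Since $\bar\lambda_b^{2}+2\theta_b$ is nowhere zero by the first lemma of this section, we may divide to obtain $s=-\tfrac12X_a\bigl(\log|\bar\lambda_b^{2}+2\theta_b|\bigr)$, i.e. $\langle\nabla^{*}_{X_b}X_b,X_a\rangle^{*}=-X_a\bigl(\log|\bar\lambda_b^{2}+2\theta_b|^{-\frac12}\bigr)$. Taking $X_b\in E_{\lambda_i}$ and letting $X_a$ range over the frame directions spanning $E_{\lambda_i}^{\perp}$ yields precisely $(\nabla^{*}_{X_b}X_b)_{E_{\lambda_i}^{\perp}}=-\bigl(\grad^{*}\log|\bar\lambda_i^{2}+2\theta_i|^{-\frac12}\bigr)_{E_{\lambda_i}^{\perp}}$, as claimed.

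The step I expect to be the main obstacle is this last algebraic collapse: organizing the two Codazzi identities so that $\omega$ drops out and the surviving combination of the $\bar\lambda$'s and $\theta$'s is exactly $-(\bar\lambda_b^{2}+2\theta_b)$. This rests on invoking Lemma \ref{condition} in the right form and only for genuinely distinct eigenvalues, and on the non-vanishing lemma to legitimize passing to the logarithm; the remaining manipulations are routine bookkeeping with the eigenframe.
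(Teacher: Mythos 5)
Your proposal is correct and takes essentially the same route as the paper: both combine the two conformal Codazzi equations so that the Moebius form $\omega$ cancels, then use Lemma \ref{condition} to collapse the remaining coefficient to $-(\bar\lambda_i^{2}+2\theta_i)$ and the non-vanishing lemma to pass to the logarithm. The only difference is organizational — you verify the off-diagonal entries (via the $B$-Codazzi equation alone) and the diagonal entries separately, whereas the paper pairs the Codazzi equations with a second vector $X_v\in E_{\lambda_i}$ and obtains the full umbilicity relation for arbitrary $X_u,X_v\in E_{\lambda_i}$ in a single computation.
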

\begin{proof}
Let $X_u,X_v \in E_{\lambda_i}$ and $X_z \in E_{\lambda_i}^{\perp}$. Taking the inner product of the conformal Codazzi equations \eqref{eq:ConfCodazzi} with $X_v$ gives
%Then, the inner product of $X_v$ with the conformal Codazzi equations 
%    \begin{align*}
%        (\nabla^{*}_{X_u}B)X_z-(\nabla^{*}_{X_z}B)X_u&=\omega(X_u)X_z-\omega(X_z)X_u,\\
 %       (\nabla^{*}_{X_u}\hat{\psi})X_z-%(\nabla^{*}_{X_z}\hat{\psi})X_u&=\omega(X_z)BX_u-%\omega(X_u)BX_z 
%    \end{align*} yields 
\begin{align}
        (\bar{\lambda}_z-\bar{\lambda}_i)\langle \nabla^{*}_{X_u}X_z,X_v\rangle^{*}-X_z(\bar{\lambda}_i)\langle X_u,X_v\rangle^{*}&=-\langle X_u,X_v\rangle^{*}\omega(X_z),\label{cd1}\\
        (\theta_z-\theta_i)\langle \nabla^{*}_{X_u}X_z,X_v\rangle^{*}-X_z(\theta_i)\langle X_u,X_v\rangle^{*}&=\bar{\lambda}_i \langle X_u,X_v\rangle^{*}\omega(X_z)\label{cd2}.
\end{align}
Multiplying \eqref{cd1} by $\bar{\lambda}_i$, adding it to \eqref{cd2} and using Lemma \ref{condition}, we obtain
\[
(-\bar{\lambda}_i^{2}-2\theta_i)\langle \nabla^{*}_{X_u}X_z,X_v\rangle^{*}-\frac{X_z(\bar{\lambda}_i^{2}+2\theta_i)}{2}\langle X_u,X_v\rangle^{*}=0.
\]
This implies $$\langle \nabla_{X_u}X_v,X_z\rangle^{*}=-\langle X_u,X_v\rangle^{*}\langle \grad^{*}(\log |\bar{\lambda}_i^{2}+2\theta_i|^{-\frac{1}{2}}), X_z\rangle^{*},$$
and this concludes the proof.
\end{proof}

\begin{lemma}\label{integrability}
    $E_{\lambda_i}^{\perp}$ is an integrable distribution for all $1 \le i \le 3.$
\end{lemma}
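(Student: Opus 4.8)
The plan is to show that for any $i$, the orthogonal complement $E_{\lambda_i}^{\perp}$ is closed under the Lie bracket, which by the Frobenius theorem gives integrability. Write $E_{\lambda_i}^{\perp}=E_{\lambda_j}\oplus E_{\lambda_\ell}$ where $\{j,\ell\}=\{1,2,3\}\setminus\{i\}$. I need to check that $[X,Y]$ has no component in $E_{\lambda_i}$ whenever $X,Y\in E_{\lambda_i}^{\perp}$, i.e. $\langle[X,Y],X_c\rangle^{*}=0$ for all $X_c\in E_{\lambda_i}$. Since $[X,Y]=\nabla^{*}_X Y-\nabla^{*}_Y X$, this amounts to controlling $\langle\nabla^{*}_X Y,X_c\rangle^{*}$ for $X,Y$ ranging over the two blocks $E_{\lambda_j},E_{\lambda_\ell}$.

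First I would split into cases according to which eigenbundles $X$ and $Y$ lie in. If $X\in E_{\lambda_j}$ and $Y\in E_{\lambda_\ell}$ with $j\ne\ell$, then equations \eqref{cd3} and \eqref{cd4} from the proof of Lemma \ref{geodesic} already give $\langle\nabla^{*}_{X_a}X_b,X_c\rangle^{*}=0$ directly (this is exactly the computation there: multiply \eqref{cd3} by $\bar\lambda_\ell$, add \eqref{cd4}, and invoke Lemma \ref{condition}, noting $\bar\lambda_\ell\bar\lambda_j+\theta_j+\theta_\ell\ne\bar\lambda_\ell\bar\lambda_i+\theta_i+\theta_\ell$ forces the bracket term to vanish). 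Hence the only remaining contributions come from $X,Y$ both in the same eigenbundle, say both in $E_{\lambda_j}$; then $\langle[X,Y],X_c\rangle^{*}=\langle\nabla^{*}_X Y-\nabla^{*}_Y X,X_c\rangle^{*}$, and by Lemma \ref{umbilical} the distribution $E_{\lambda_j}$ is umbilical, so $\langle\nabla^{*}_X Y,X_c\rangle^{*}=-\langle X,Y\rangle^{*}\langle\grad^{*}(\log|\bar\lambda_j^{2}+2\theta_j|^{-1/2}),X_c\rangle^{*}$, which is symmetric in $X,Y$; therefore the skew part vanishes and $\langle[X,Y],X_c\rangle^{*}=0$.

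Combining the two cases, every bracket of sections of $E_{\lambda_i}^{\perp}$ stays in $E_{\lambda_i}^{\perp}$, so $E_{\lambda_i}^{\perp}$ is integrable; since $i$ was arbitrary among $1,2,3$, the claim follows. I do not expect a genuine obstacle here: the content is entirely a bookkeeping assembly of the conformal Codazzi relations already extracted in Lemmas \ref{condition}, \ref{umbilical} and \ref{geodesic}. The one point requiring a little care is the case where one of $j,\ell$ has multiplicity one, so that $X$ and $Y$ cannot both be chosen in the same one-dimensional block and there is nothing to check there; the symmetric-part argument is only needed for blocks of dimension $\ge 2$, and for those Lemma \ref{umbilical} applies verbatim. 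It is also worth noting that Lemma \ref{geodesic} shows $E_{\lambda_i}^{\perp}$ is in fact \emph{totally geodesic} precisely when $\grad^{*}|\bar\lambda_j^{2}+2\theta_j|_{E_{\lambda_i}}=0$ for $j\ne i$; integrability, however, holds unconditionally because the obstruction to total geodesy is a symmetric tensor (the second fundamental form of the distribution) and thus does not see the antisymmetric bracket.
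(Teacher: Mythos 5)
Your argument is correct and is essentially the paper's own proof in different packaging: the paper splits into the case $\bar\lambda_u=\bar\lambda_v$ (where the first conformal Codazzi equation makes $\langle\nabla^{*}_{X_u}X_v,X_z\rangle^{*}$ symmetric in $X_u,X_v$, which is exactly the content of Lemma \ref{umbilical} that you invoke) and the case $\bar\lambda_u\ne\bar\lambda_v$ (where combining the two Codazzi equations via Lemma \ref{condition} yields $(\bar\lambda_v^{2}+2\theta_v)\langle\nabla^{*}_{X_u}X_v,X_z\rangle^{*}=0$, the same computation you borrow from the proof of Lemma \ref{geodesic}). One small correction: in your mixed case the parenthetical justification ``noting $\bar\lambda_\ell\bar\lambda_j+\theta_j+\theta_\ell\ne\bar\lambda_\ell\bar\lambda_i+\theta_i+\theta_\ell$'' is not the right reason, since by Lemma \ref{condition} both quantities vanish and are therefore equal; what actually kills the mixed connection coefficient is that multiplying \eqref{cd3} by $\bar\lambda_\ell$ and adding \eqref{cd4} produces the factor $\bar\lambda_\ell^{2}+2\theta_\ell$, which is nowhere zero by the first lemma of Section 4.
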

\begin{proof}
    Let $X_u,X_v \in E_{\lambda_i}^{\perp}$ and $X_z \in E_{\lambda_i}.$ Taking the inner product of $X_z$ with the conformal Codazzi equation $$(\nabla^{*}_{X_u}B)X_v-(\nabla^{*}_{X_v}B)X_u=\omega(X_u)X_v-\omega(X_v)X_u,$$ we get $$(\bar{\lambda}_v-\bar{\lambda}_i)\langle \nabla^{*}_{X_u}X_v,X_z\rangle^{*}=(\bar{\lambda}_u-\bar{\lambda}_i)\langle \nabla^{*}_{X_v}X_u,X_z\rangle^{*}.$$ If $\bar{\lambda}_u=\bar{\lambda}_v,$ then $[X_u,X_v]\in E_{\lambda_i}^{\perp}.$ If $\bar{\lambda}_v \ne \bar{\lambda}_u,$ then multiplying the preceding equation by $\bar{\lambda}_v$ and using the Lemma \ref{condition}, we obtain \begin{equation}\label{vcd1}(\bar{\lambda}_v^{2}+\theta_v+\theta_i)\langle \nabla^{*}_{X_u}X_v,X_z\rangle^{*}=(\theta_i-\theta_u)\langle \nabla^{*}_{X_v}X_u,X_z\rangle^{*}.\end{equation}
However, the inner product of $X_z$ with the equation $$(\nabla^{*}_{X_u}\hat{\psi})X_v-(\nabla^{*}_{X_v}\hat{\psi})X_u=\omega(X_v)BX_u-\omega(X_u)BX_v$$ yields 
\begin{equation}\label{vcd2}
    (\theta_v-\theta_i)\langle \nabla^{*}_{X_u}X_v, X_z\rangle^{*}=(\theta_u-\theta_i)\langle \nabla^{*}_{X_v}X_u,X_z\rangle^{*}.
\end{equation}
Adding equations \eqref{vcd1} and \eqref{vcd2} we obtain $(\bar{\lambda}_v^{2}+2\theta_v)\langle \nabla^{*}_{X_u}X_v,X_z\rangle^{*}=0,$ which implies $\langle \nabla^{*}_{X_u}X_v,X_z\rangle^{*}=0$, and the conclusion follows.
\end{proof}

A central step in proving Theorem \ref{main} is to obtain a warped product structure with respect to the metric \eqref{eq:metricstar}. To this end, we recall that an orthogonal net $\mathcal{E}=\{E_1,\ldots,E_k\}$ on a Riemannian manifold $M^{n}$ is called {\em twisted produd net} if $E_i$ is umbilical and $E_i^{\perp}$ is integrable, for all $1\leq i\leq k$. Thus, it follows from Lemmas \ref{umbilical} and \ref{integrability} that
\[
\mathcal{E}=\{E_{\lambda_1}, E_{\lambda_2}, E_{\lambda_3}\}
\]
is a twisted produd net on $(M^n,\pl \,\,,\,\pr^\ast)$.

\begin{proposition} \label{prop:warp}
Let $f\co M^{n}\to\R^{n+1}$, $n\geq 4$, be an umbilic-free hypersurface with semi-parallel Moebius second fundamental form, with three distinct Moebius principal curvatures. Then there exists an open subset $U\subset M$ such that the Moebius metric \eqref{eq:metricstar} is a warped product metric on $U$. 
\end{proposition}
\begin{proof}
Fix an arbitrary point $x\in M^n$. It follows from \cite[Corollary 1]{decomposition} that there exists an open subset $U\subset M^{n}$, with $x\in U$, and a product representation $\Phi\co\prod_{i=1}^{3}M_i \to U$ of $\mathcal{E}$ that is an isometry with respect to a twisted product metric 
\begin{equation} \label{mobius-metric1}
\pl\,\,,\,\pr = \sum_{i=1}^{3}\rho_i^{2} \pi_i^{*} \pl \,\,,\,\pr_{i},
\end{equation}
for some positive \emph{twisting functions} $\rho_i \in C^{\infty}\left(\prod_{i=1}^{3}M_i\right)$, $1\leq i\leq3$, where $\pi_i$ denotes the canonical projection. Let $\{E_1,\ldots,E_3\}$ be the product net on $\prod_{i=1}^{3}M_i$, that is,
\[
E_i(x)={\tau_i^{x}}_{*}(T_{x_i}M_i),
\]
for all $x=(x_1, x_2,x_3)\in \prod_{i=1}^{3}M_i$, where $\tau_i^{x}:M_i\to \prod_{i=1}^{3}M_i$ is the standard inclusion. It follows from \cite[Proposition 2]{decomposition} that $E_i$ is an umbilical distribution with mean curvature vector field
\[
-(\grad(\log \circ \rho_i))_{E_i^{\perp}},
\]
where $\grad$ is the gradient with respect to $\pl \,\,,\,\pr$. We claim that there exist or\-tho\-go\-nal coordinates $(x_1,\ldots,x_n)$ on $\prod_{k=1}^{3}M_k$ and positive functions $r_i\in C^\infty(M_i)$ such that $$\rho_i=r_i|\bar{\lambda}_{i}^{2}+2\theta_i|^{-\frac{1}{2}}\circ \Phi,$$ for each $1 \le i \le 3.$ In fact,
%in Lemma \ref{umbilical} we proved that $E_{\lambda_i}$ is an umbilical distribution with mean curvature vector field $-(\grad^{*}(\log |\bar{\lambda}_i^{2}+2\theta_i|^{-\frac{1}{2}}))_{E_{\lambda_i}^{\perp}}$. 
since $\Phi$ is an isometry and satisfy $\Phi_{*}E_i(x)=E_{\lambda_i}(\Phi(x))$, we have  \begin{align*}
    -\langle \grad^{*}(\log |\bar{\lambda}_i^{2}+2\theta_i|^{-\frac{1}{2}}), X_\ell\rangle^{*}&=\langle \nabla^{*}_{X_a}X_a, X_\ell\rangle^{*}\\
    &=\langle\nabla^{*}_{{\tau_{i}^{x}}_{*}\bar{X}_a}\Phi_{*}{\tau_{i}^{x}}_{*}\bar{X}_a, \Phi_{*}{\tau_{\ell}^{x}}_{*}\bar{X}_\ell \rangle^{*}\\
    &=\langle\Phi_{*}\nabla_{{\tau_{i}^{x}}_{*}\bar{X}_a}{\tau_{i}^{x}}_{*}\bar{X}_a, \Phi_{*}{\tau_{\ell}^{x}}_{*}\bar{X}_\ell \rangle^{*}\\
    &=\langle\nabla_{{\tau_{i}^{x}}_{*}\bar{X}_a}{\tau_{i}^{x}}_{*}\bar{X}_a,{\tau_{\ell}^{x}}_{*}\bar{X}_\ell \rangle\\
    &=-\langle \grad(\log \circ \rho_i), {\tau_{\ell}^{x}}_{*}\bar{X}_\ell\rangle,
\end{align*} 
for all $X_a\in E_{\lambda_i}$ and $X_\ell \in E_{\lambda_i}^{\perp}$. Thus, along the orthogonal complement of the subspace $E_{\lambda_i}(\Phi(x))$, we have
\begin{align*}
\Phi_{*} \grad(\log \circ \rho_i) & = 
\grad^{*}(\log |\bar{\lambda}_i^{2}+2\theta_i|^{-\frac{1}{2}}) \\
& = \Phi_{*}\grad(\log \circ |\bar{\lambda}_i^{2}+2\theta_i|^{-\frac{1}{2}}\circ \Phi).
\end{align*}
Since $E_{\lambda_1}^{\perp}, E_{\lambda_2}^{\perp}$ and $E_{\lambda_3}^{\perp}$ are integrable, it follows that $f$ is locally holonomic on $U$, and therefore there exist orthogonal coordinates $(x_1, \ldots, x_n)$ on $U$ such that 
\[
E_{\lambda_i}=\spa\left\{\frac{\partial}{\partial x_{m_1+\ldots+m_{i-1}+1}}, \ldots, \frac{\partial}{\partial x_{m_1+\ldots+m_i}}\right\},
\]
for all $1 \le i \le 3.$ Therefore, one has
\[
\rho_i=r_i|\bar{\lambda}_i^{2}+2\theta_i|^{-\frac{1}{2}}\circ \Phi,
\]
for some positive functions $r_i\in C^\infty(M_i)$, and this proves the claim. Dropping the isometry $\Phi$, we can write the metric \eqref{mobius-metric1} as 
\begin{eqnarray} \label{mobius-metric2}
\begin{aligned}    
\langle \cdot, \cdot \rangle^{*} = & |\bar{\lambda}_1^{2}+2\theta_1|^{-1}\sum_{j=1}^{m_1}V_j^{2}dx_j^{2}+|\bar{\lambda}_2^{2}+2\theta_2|^{-1}\sum_{j=m_1+1}^{m_1+m_2}V_j^{2}dx_j^{2} \\
&+|\bar{\lambda}_3^{2}+2\theta_3|^{-1}\sum_{j=m_1+m_2+1}^{n}V_j^{2}dx_j^{2},
\end{aligned}
\end{eqnarray} 
where
\[
V_j=r_i\sqrt{\left\langle\frac{\partial}{\partial x_j}, \frac{\partial}{\partial x_j}\right\rangle},
\]
for $1\leq i\leq3$ and $m_1+\ldots+m_{i-1}+1 \le j \le m_1+\ldots+m_i$. Setting
\[
v_i=|\bar{\lambda}_j^{2}+2\theta_j|^{-\frac{1}{2}}V_i,
\]
for $1\le j\le3$ and $m_1+\ldots+m_{j-1}+1 \le i \le m_1+\ldots+m_j$, we can write
\[
\langle \cdot, \cdot \rangle^{*}=\sum_{i=1}^{n}v_i^{2}dx_i^{2},
\]
and $\{X_i: X_i=v_i^{-1}\frac{\partial}{\partial x_i}\}_{i=1}^{n}$ is an orthonormal frame on $(M^{n}, \langle \,\,,\,\rangle^{*})$. It follows from Lemma \ref{condition} and the conformal Gauss equation, that $f$ is Moebius semi-parallel if and only if
\[
K^{*}(X_a,X_b)=0,
\]
for all $X_a \in E_{\lambda_i}$ and $X_b \in E_{\lambda_j}$, where $1 \le i \ne j \le 3$. Finally, from Lemma \ref{warped}, we have $\frac{\partial \mu}{\partial x_i}=0$ for all $m_1+m_2+1 \le i \le n$, and this implies that $\langle \cdot, \cdot \rangle^{*}$ is a warped metric with warping function $\mu=|\bar{\lambda}_3^{2}+2\theta_3|^{-\frac{1}{2}}$.
\end{proof}

\section{Proof of Theorem \ref{main}}

\begin{proof}[Proof of Theorem \ref{main}]
First, assume that $m_1=1$, $m_2=1$ and $m_3 \geq 2$. By a change of coordinates, we can assume that the Moebius metric \eqref{mobius-metric2} can be written as 
\begin{equation}\label{mobius-metric-2}
    \langle \,\,,\,\rangle^{*}=|\bar{\lambda}_1^{2}+2\theta_1|^{-1}dx_1^{2}+|\bar{\lambda}_2^{2}+2\theta_2|^{-1}dx_2^2+|\bar{\lambda}_3^{2}+2\theta_3|^{-1}\sum_{j=3}^{n}V_j^{2}dx_j^{2}.
\end{equation} 
By fixing the coordinate $x_2$ on the open subset $U$, given by the Proposition \ref{prop:warp}, the product $M_1\times M_3$ endowed with the metric
\[
|\bar{\lambda}_1^{2}+2\theta_1|^{-1}dx_1^2+|\bar{\lambda}_3^{2}+2\theta_3|^{-1}\sum_{j=3}^{n}V_j^{2}dx_j^{2}
\]
is a totally geodesic hypersurface of $M=M_1\times M_2 \times M_3$.
%endowed with the metric \eqref{mobius-metric-2}. In fact, let $c:M_1\times M_3 \to M_1\times M_2 \times M_3$ be the canonical inclusion and denote by $\nabla^{*}$ and $\Tilde{\nabla}$ the Levi-Civita connections of the Moebius metric and the product metric on $M$. By Proposition 1 of \cite{decomposition}, 
%\begin{equation}\label{connection}\nabla^{*}_{X}Y=\Tilde{\nabla}_{X}Y+\sum_{i=1}^{3}(\langle X^i, Y^i\rangle^{*}U_i-\langle X,U_i\rangle^{*}Y^i-\langle Y,U_i\rangle^{*}X^{i}),\end{equation} where $X\to X^i$ is the orthogonal projection over $E_{\lambda_i}$ and $U_i=-\grad^{*}(\log \circ |\bar{\lambda}_i^{2}+2\theta_i|^{-\frac{1}{2}})$, $1 \le i \le 3.$ Then $\alpha^{c}(X_a,X_b)=(\nabla^{*}_{X_a}c_{*}X_b)_{E_{\lambda_2}}=0$ for all $X_a, X_b \in E_{\lambda_1}\cup E_{\lambda_3}$, because $|\bar{\lambda}_i^{2}+2\theta_i|^{-\frac{1}{2}}\circ c$ does not depend on $x_2$.  Applying the Gauss equation to $c$, we conclude that $K^{c}(X_a,X_b)=K^{*}(X_a,X_b)$.
By Lemma \ref{warped}, the induced metric by the canonical inclusion $i_2\co M_1\times M_3\to M_1\times M_2\times M_3$ is warped with warping function
\[
\mu=|\bar{\lambda}_3^{2}+2\theta_3|^{-\frac{1}{2}}.
\]
Since $f$ is Moebius semi-parallel, one has $\Hess \mu=0$, where the hessian is computed with respect to the metric $g_1=|\bar{\lambda}_{1}^{2}+2\theta_1|^{-1}dx_1^{2}$. Denote by
\[
v_1=|\bar{\lambda}_1^{2}+2\theta_1|^{-\frac{1}{2}} \ \text{and} \ v_i=|\bar{\lambda}_3^{2}+2\theta_3|^{-\frac{1}{2}}V_i,
\]
for $3\le i\le n$, and set
\[
h_{ij}=\frac{1}{v_i}\frac{\partial v_j}{\partial x_i}
\]
for $2\le i\ne j\le n$. In order to compute $\Hess\,\mu (\partial_1,\partial_1)$, we have
\[
\nabla_{\partial_1}\partial_1=\nabla_{\partial_1}(v_1X_1) =\frac{\partial v_1}{\partial x_1}X_1
=\frac{\partial v_1}{\partial x_1}\frac{1}{v_1}\frac{\partial}{\partial x_1},
\]
with respect to $g_1$. Thus,
\begin{align*}
0 &= \Hess \,\mu (\partial_1, \partial_1)=\frac{\partial}{\partial x_1}\left(\frac{\partial \mu}{\partial x_1}\right)-(\nabla_{\partial_1}\partial_1)(\mu) \\
&=\frac{1}{V_j}\frac{\partial}{\partial x_1}(v_1h_{1j})-\frac{1}{v_1}\frac{\partial v_1}{\partial x_1}\frac{1}{V_j}\frac{\partial v_j}{\partial x_1} =\frac{\partial}{\partial x_1}\left(\frac{v_1}{V_j}h_{1j}\right)-\frac{1}{V_j}\frac{\partial v_1}{\partial x_1}h_{1j}\\
    &=\frac{v_1}{V_j}\frac{\partial}{\partial x_1}h_{1j}.
\end{align*}
Since
\[
h_{1j}=-\frac{V_j}{2}\frac{|\bar{\lambda}_1^{2}+2\theta_1|^{\frac{1}{2}}}{|\bar{\lambda}_3^{2}+2\theta_3|^{\frac{3}{2}}}\frac{\partial}{\partial x_1}|\bar{\lambda}_3^{2}+2\theta_3|
\]
and $\frac{\partial}{\partial x_1} h_{1j}=0$, we conclude that
\begin{equation} \label{eq:delx_1}
\frac{\partial}{\partial x_1}|\bar{\lambda}_3^{2}+2\theta_3|=0
\end{equation}
along $M_1\times M_3$. Since $x_2\in M_2$ is arbitrary, equation \eqref{eq:delx_1} holds along $M_1\times M_2 \times M_3$. A similar argument shows that
\[
\frac{\partial}{\partial x_2}|\bar{\lambda}_3^{2}+2\theta_3|=0
\]
along $M_1\times M_2 \times M_3$, and Lemma \ref{warped} implies that $|\bar{\lambda}_3^{2}+2\theta_3|$ is constant in $M_1\times M_2 \times M_3.$ It follows from the conformal Gauss equation and Lemma \ref{condition} that the Moebius scalar curvature is constant. According to \cite[Theorem 5.5]{semi}, the Moebius second fundamental form of $f$ must be parallel. Finally, from \cite{classification-mobius-parallel-hyper}, we conclude that $f(M^{n})$ is locally Moebius equivalent to a cone over the torus $\mathbb{S}^{1}(r)\times \mathbb{S}^{1}(\sqrt{1-r^{2}})\subset \mathbb{S}^{3}$.
\end{proof}

\begin{remark}
We note that the proof of Theorem \ref{main} in the cases $m_1 \ge 1$, $m_2 \ge 2$, and $m_3 \ge 2$ reduces to showing that $\vert{}\bar{\lambda}_i^{2}+2\theta_i\vert{}$ is constant on $M_1 \times M_2 \times M_3$ for $m_i \ge 2$. Since the arguments required are entirely similar to the ones presented above, this concludes the proof of Theorem \ref{main}.    
\end{remark}

\section*{Acknowledgements}

Mateus Antas was supported by CAPES, grant 88887.133756/2025-00. Fernando Manfio was supported by FAPESP, grant 2022/16097-2.

\Addresses

\end{document}